\let\oldFootnote\footnote
\newcommand\nextToken\relax
\renewcommand*{\footnote}[1]{\oldFootnote{#1}\futurelet\nextToken\isFootnote}
\newcommand*{\isFootnote}{\ifx
                              \footnote\nextToken\textsuperscript{\multfootsep}
\fi}
\newlist{properties}{enumerate}{1}
\setlist{nosep}
\setlist[enumerate]{label = (\roman*), ref = \roman*}
\setlist[properties]{align = left, labelindent = 0.5em, leftmargin = *}
\tikzset{
    font = \small,
    > = {Latex[length = 6pt]}
}
\pgfplotsset{compat = 1.18}
\declaretheoremstyle[
    notefont = \bfseries,
    headpunct = {:},
    spaceabove = 0.5\baselineskip\parskip,
    spacebelow = 0.5\baselineskip\parskip
]{theoremstyle}
\declaretheorem[
    name = Definition,
    style = theoremstyle,
    numberwithin = section
]{definition}
\declaretheorem[
    name = Theorem,
    style = theoremstyle,
    sibling = definition
]{theorem}
\declaretheorem[
    name = Corollary,
    style = theoremstyle,
    sibling = definition
]{corollary}
\declaretheorem[
    name = Example,
    style = theoremstyle,
    sibling = definition
]{example}
\declaretheorem[
    name = Remark,
    style = theoremstyle,
    sibling = definition
]{remark}
\renewenvironment{proof}[1][\proofname]{\par
\pushQED{\qed}%
\normalfont \topsep0.5\baselineskip \partopsep\parskip \relax
\trivlist
\item[\hskip\labelsep
\itshape
#1\@addpunct{.}]\ignorespaces
}{%
    \popQED\endtrivlist\@endpefalse
}
\newcommand*{\R}{\mathbb{R}}
\newcommand*{\N}{\mathbb{N}}
\newcommand*{\Lcal}{\mathcal{L}}
\newcommand*{\Pcal}{\mathcal{P}}
\newcommand*{\comma}{\text{\,,}}
\newcommand*{\period}{\text{\,.}}
\renewcommand*{\colon}{:\;\;}
\newcommand*{\rdb}[1]{\left(#1\right)}
\newcommand*{\sqb}[1]{\left[#1\right]}
\newcommand*{\clb}[1]{\left\{#1\right\}}
\newcommand*{\transposed}{\mathsf{T}}
\DeclareMathOperator{\cone}{cone}
\DeclareMathOperator{\conv}{conv}
\DeclareMathOperator{\rk}{rk}
\DeclareMathOperator{\extdir}{ext\,dir}
\DeclareMathOperator{\Eff}{Eff}
\DeclareMathOperator{\gEff}{\overline{Eff}}
\DeclareMathOperator{\Min}{Min}
\DeclareMathOperator{\gMin}{\overline{Min}}
\newcommand*{\conemin}[1]{\textstyle \min_{#1}}
\newcommand*{\set}[2]{\left\{ #1 \;\middle|\; #2 \right\}}
\newcommand*{\inlineset}[2]{\{ #1 \;\vert\; #2 \}}
\newcommand*{\recc}{0^+}
\definecolor{linkcolor}{rgb}{0,0,0.75}
\crefname{section}{Section}{Sections}
\crefname{subsection}{Section}{Sections}
\crefname{equation}{Equation}{Equations}
\crefname{figure}{Figure}{Figures}
\crefname{table}{Table}{Tables}
\crefname{definition}{Definition}{Definitions}
\crefname{theorem}{Theorem}{Theorems}
\crefname{proposition}{Proposition}{Propositions}
\crefname{lemma}{Lemma}{Lemmas}
\crefname{corollary}{Corollaray}{Corollaries}
\crefname{example}{Example}{Examples}
\crefname{remark}{Remark}{Remarks}
\crefname{enumi}{part}{parts}
\crefname{propertiesi}{property}{properties}
\numberwithin{equation}{section}
\numberwithin{figure}{section}
\numberwithin{table}{section}
\definecolor{setcolor}{rgb}{0.9,0.9,0.9}
\definecolor{plotcolor1}{rgb}{1,0,0.2}
\definecolor{plotcolor2}{rgb}{0,0.2,1}
\tikzset{
    ultra thin/.style = {line width = 0.1pt},
    very thin/.style = {line width = 0.15pt},
    thin/.style = {line width = 0.25pt},
    semithick/.style = {line width = 0.5pt},
    thick/.style = {line width = 0.75pt},
    very thick/.style = {line width = 1pt},
    ultra thick/.style = {line width = 1.4pt},
    every path/.append style = {thin},
    point/.style = {fill, radius = 1.8pt},
    point scaled/.style = {fill, radius = 1.2pt},
    set/.style = {fill = setcolor},
    ordering cone line/.style = {},
    ordering cone fill/.style = {
        pattern = {Lines[angle = -30, distance = 4pt]},
        pattern color = black
    },
    ordering cone/.style = {
        ordering cone line,
        ordering cone fill
    },
    minimal points/.style = {ultra thick},
    objective/.style = {semithick},
    upper image/.style = {dashed},
    every mark/.append style = {
        mark size = 3pt,
        line width = 0.75pt
    }
}
\pgfplotsset{
    every axis/.append style = {
        enlarge x limits = 0.05,
        enlarge y limits = 0.1,
        solid,
        only marks,
        cycle list name = mylist
    },
    every axis y label/.append style = {
        at = {($(ticklabel cs:0.5) + (0.5ex, 0ex)$)}
    },
    every axis x label/.append style = {
        at = {($(ticklabel cs:0.5) + (0ex, 0.5ex)$)}
    },
    every tick/.style = {
        black},
    full width/.style = {
        width = 0.5\linewidth
    },
    overfull width/.style = {
        width = 0.52\linewidth
    },
    basic heigth/.style = {
        height = 0.22\textheight
    },
    basic/.style = {
        basic heigth,
        full width,
        minor tick num = 1
    },
    basic q/.style = {
        basic,
        xtick distance = 1,
        minor tick num = 0
    },
    every boxplot/.append style = {
        black,
        mark = |,
        every mark/.append style = {line width = 0.25pt}
    },
    boxplots/.style = {
        overfull width,
        basic heigth,
        boxplot/draw direction = x,
        boxplot/box extend = 0.75,
        ytick distance = 1,
        minor tick num = 0
    },
    every axis plot/.append style = {
        thick
    },
    every axis legend/.append style = {
        legend cell align = left,
        font = \footnotesize
    },
    legend top left/.style = {
        every axis legend/.append style = {
            at = {(0, 1)},
            anchor = north west,
            xshift = {0.2cm},
            yshift = {-0.2cm}
        }
    },
    x-q/.style = {
        xlabel = {number $q$ of objectives}
    },
    x-ratio/.style = {
        xlabel = {computing time ratio}
    },
    y-q/.style = {
        ylabel = {number $q$ of objectives}
    },
    y-k/.style = {
        ylabel = {rank $k$ of objective matrix}
    },
    y-time/.style = {
        ylabel = {computing time in \si{\s}}
    }
}
\pgfplotsset{
    groupplot xlabel/.initial={},
    every groupplot x label/.style={
        at={($({group c1r\pgfplots@group@rows.west}|-{group c1r\pgfplots@group@rows.outer south})!0.5!({group c\pgfplots@group@columns r\pgfplots@group@rows.east}|-{group c\pgfplots@group@columns r\pgfplots@group@rows.outer south})$)},
        anchor=north,
    },
    groupplot ylabel/.initial={},
    every groupplot y label/.style={
        rotate=90,
        at={($({group c1r1.north}-|{group c1r1.outer
        west})!0.5!({group c1r\pgfplots@group@rows.south}-|{group c1r\pgfplots@group@rows.outer west})$)},
        anchor=south
    },
    execute at end groupplot/.code={%
    \node [/pgfplots/every groupplot x label]
    {\pgfkeysvalueof{/pgfplots/groupplot xlabel}};
    \node [/pgfplots/every groupplot y label]
    {\pgfkeysvalueof{/pgfplots/groupplot ylabel}};
    },
    group/only outer labels/.style =
        {
        group/every plot/.code = {%
            \ifnum
                \pgfplots@group@current@row=\pgfplots@group@rows\else%
                \pgfkeys{xticklabels = {}, xlabel = {}}
            \fi%
            \ifnum
                \pgfplots@group@current@column=1\else%
                \pgfkeys{yticklabels = {}, ylabel = {}}
            \fi%
        }
    }
}
\def\endpgfplots@environment@groupplot{%
    \endpgfplots@environment@opt%
    \pgfkeys{/pgfplots/execute at end groupplot}%
    \endgroup%
}
\title{Low-Rank Multi-Objective Linear Programming}
\author{Andreas Löhne\thanks{Friedrich Schiller University Jena, Department of Mathematics, 07737 Jena, Germany\\\texttt{andreas.loehne@uni-jena.de}} \and Pascal Zillmann\thanks{Corresponding Author\\Friedrich Schiller University Jena, Department of Mathematics, 07737 Jena, Germany\\\texttt{pascal.zillmann@uni-jena.de}}}
\date{\today}
\begin{document}

    \maketitle

    \begin{abstract}
    When solving multi-objective programs (MOLPs), the number of objectives essentially determines the computing time.
    This can even lead to practically unsolvable problems.
    Consequently, it is of huge interest to reduce the number of objectives without losing information.
    There exist approaches for transforming vector linear programs (VLPs), i.e., the ordering cone is non-standard, into MOLPs in the literature.
    We here propose a method for solving MOLPs more efficiently by applying the transformation in reverse.
    In particular, we discuss MOLPs with linear dependent objective functions.
    The resulting VLPs have merely as many objectives as the rank of the objective matrices of the MOLPs.
    To achieve this, only a factorization of this matrix needs to be calculated.
    One factor then forms a new ordering cone, while the other remains as the objective matrix.
    Through multiple series of numerical experiments, we show that this approach indeed significantly reduces computing time, and therefore provides a technique for solving low-rank MOLPs in practice.
    As there are fewer objectives to consider, the approach additionally helps decision makers to get a better visualization as well as understanding of the actual problem.
    Moreover, we will point out that the equivalence between MOLPs and corresponding VLPs can be used to derive statements about the well-known concept of nonessential objectives.
\end{abstract}

    \noindent\textbf{Keywords:} Multi-objective programming; Linear vector optimization; Objective space reduction; Criteria reduction; Nonessential objectives

    \noindent\textbf{Mathematics Subject Classification:} 90C29; 90C05

    \section{Introduction}
\label{sec:introduction}

There are lots of solution algorithms for multi-objective linear programs (MOLPs) that operate in the objective space.
These algorithms aim to find minimal points of the image of the MOLP.
Among these methods are simplex-like methods, see, for example,~\textcite{Dauer_Saleh_1990,Dauer_Liu_1990,Dauer_1993,Rudloff_etal_2017}.
There are also cutting plane methods such as Benson’s algorithm~\parencites{Benson_1998,Benson_1998a,Benson_1998b}, as well as its variants and extensions~\parencite{Loehne_2011,Hamel_etal_2014,Csirmaz_2016}, dual variants~\parencite{Ehrgott_etal_2012,Hamel_etal_2014}, approximative algorithms~\parencite{Shao_Ehrgott_2008,Shao_Ehrgott_2008a}, and the NISE algorithm~\parencite{Cohon_etal_1979,Cohon_1978}.
In addition, the dominance usually becomes weaker in higher dimensions since the fraction of the ambient space occupied by the ordering cone decreases.
Therefore, computing time increases rapidly with the number of objectives and quickly exceeds what can be calculated.
However, optimization problems with many objectives are common in practice (whether linear or nonlinear).
The list in~\textcite{CoelloCoello_etal_2002} includes several problems with ten or more objectives.
Moreover, \textcite{Csirmaz_2016} employs multi-objective linear optimization to map an entropy region of jointly distributed random variables.
One example of a nonlinear program is deciding on the most environmentally friendly design for photovoltaic systems~\parencite{Perez-Gallardo_etal_2018}.
Additionally, multi-objective problems with many objectives are considered in evolutionary multi-objective optimization, e.g., \textcite{CoelloCoello_etal_2002,Rosenthal_2023}.

Due to the large number of objectives, it is valuable to reduce the number of objectives without losing information if possible.
Many approaches have been investigated to reduce the dimension of the objective space for nonlinear programs.
For instance, \textcite{Brockhoff_Zitzler_2006,Brockhoff_Zitzler_2006a,Brockhoff_Zitzler_2009,Saxena_etal_2013,Pozo_etal_2012} examined which objectives are essential for describing the set of minimal points and which can be eliminated.
\textcite{LopezJaimes_etal_2014,Cheung_etal_2016,Lindroth_etal_2010,Perez-Gallardo_etal_2018} simplified the problem by constructing a smaller set of objectives from the actual problem.

Moreover, \textcite{Gal_Leberling_1977,Gal_1980,Malinowska_2006,Thoai_2012} studied the concept of nonessential objectives in the context of multi-objective linear programming.
An important result is that objectives can be deleted if they can be represented as a conic combination of the other objectives.
Thus, objectives can be ignored when they are linearly dependent in a particular manner.
However, not all linear combinations are conic combinations, and therefore we ask whether it is possible to eliminate linear dependence completely.

Given a vector linear program (VLP) whose ordering cone is polyhedral and therefore determined by a matrix, we can multiply the objective matrix by this matrix and replace the ordering cone with the standard ordering cone.
We then obtain an equivalent MOLP.
This well-known method is presented in~\textcites[Lemma~2.3.4]{Sawaragi_etal_1985}[Theorem~7.5]{Yu_1985}[Proposition~4.1]{Weidner_1990}[Theorem~4.1]{Engau_Wiecek_2007}.
Also, \textcite[Lemma~1.18]{Eichfelder_2008} adresses such a transformation approach in a more general setting.
Motivated by the question of what happens when objectives are linearly combined or deleted, \textcite[Theorem~3.1]{Dempe_etal_2015} apply the method to arbitrary vector optimization problems with arbitrary objective functions, nonempty closed feasible sets, and convex ordering cones.
Moreover, \textcite[Theorem~3.2]{Cambini_etal_2003} discuss the method with respect to several kinds of minimality, and \textcite{Engau_Wiecek_2007} consider relations for approximate minimality.

In the literature, optimization problems are always transformed to reduce them to simpler problems.
For instance, \textcites[Remark~7.12]{Yu_1985}[page~17]{Eichfelder_2008} describe how ordering relations induced by arbitrary polyhedral ordering cones may be replaced by the natural order.
However, we can also do this in the converse direction: Starting with a factorization of the objective matrix of an MOLP yields an equivalent VLP by defining the ordering cone through the one factor and using the other one as the objective matrix.
If the objective matrix does not have a full row rank, a suitable factorization results in a problem with merely as many objectives as the rank of the original objective matrix.
This approach eliminates linear dependence but introduces a new partial order.
Moreover, with this approach, we can simplify and generalize the results about nonessential objectives in~\textcite{Gal_Leberling_1977,Gal_1980,Malinowska_2006,Thoai_2012}.

In practice, data is often subject to uncertainty.
Therefore, the objective matrix can be approximated by a lower-rank matrix without any loss of information content within the range of uncertainty.
For instance, if the smallest singular values of the objective matrix are below a certain level of uncertainty, they do not properly contribute to the problem.
One could then delete these singular values from the singular value decomposition of the objective matrix.
If the $k$~largest singular values are retained, the resulting approximation has rank~$k$.
Due to the Eckart-Young theorem~\parencite{Eckart_Young_1936} and its generalization by~\textcite{Mirsky_1960}, this approximation is optimal w.r.t.\ unitarily invariant matrix norms, such as the Frobenius norm and the Euclidean operator norm~\parencite[Theorem~2.4.8]{Golub_VanLoan_2013}, over all matrices of rank at most~$k$.
Decision makers may find it sufficent to consider the simplified problem, which offers the advantages of a lower solution time and greater interpretability.

The concept of low-rank approximation is closely related to principal component analysis (PCA), cf.~\textcites{Pearson_1901}{Jolliffe_1986}.
Given a centered data matrix, PCA seeks a small number of orthogonal directions that capture most of the variance in the data.
Mathematically, this is equivalent to finding a rank-$k$ approximation of the matrix that minimizes the Frobenius norm of the error.
In this sense, replacing the objective matrix in a multiobjective optimization problem by a low-rank approximation can be interpreted as projecting the objectives onto the \enquote{most informative} directions, similar to PCA.

This article is structured as follows.
In \cref{sec:problem_setting_preliminaries}, we introduce the notions of an MOLP and a VLP in~$\R^q$ as well as the concepts of minimality and generalized minimality.
\cref{sec:reduction_of_objectives} presents the aforementioned transformation approach.
We prove that both the MOLP and the VLP are indeed equivalent problems and explain how the VLP can be interpreted.
Additionally, we examine the case when the decomposition is not of minimal dimensions, and state a more advanced theorem on the equivalence of MOLP and VLP in a generalized sense.
\cref{sec:nonessential_objectives} focuses on the concept of nonessential objectives.
We prove the statement on conic combinations of objectives using the reformulation of the MOLP into an equivalent VLP with the power of the concept of generalized minimality from \cref{sec:reduction_of_objectives}.
We relate the elimination of nonessential objectives to facet or vertex enumeration of the ordering cone.
In \cref{sec:examples}, we demonstrate the effectiveness of the transformation approach through a series of numerical experiments and therefore show that it provides a powerful technique for solving MOLPs in practice.

    \section{Problem Setting and Preliminaries}
\label{sec:problem_setting_preliminaries}

In this section, we introduce multi-objective linear programs (MOLP) and---as a generalization---vector linear programs (VLP) in $\R^q$.
To this end, we first need to clarify the notion of minimality.

Consider the preorder~$\leq_C$ on $\R^q$ generated by a convex cone~$C \subseteq \R^q$ \parencite{Goepfert_Nehse_1990,Jahn_2011,Loehne_2011}, i.e.,
\begin{equation}
    \label{eq:preorder}
    \leq_C \coloneq \set{(z^1, z^2) \in \R^q \times \R^q}{z^2 - z^1 \in C} \!\period
\end{equation}
We will always write $z^1 \leq_C z^2$ instead of $(z^1, z^2) \in \ \leq_C$.
We omit the index if $C$ equals the cone~$\R_+^q \coloneq \inlineset{z \in \R^q}{\forall\, j = 1, \ldots, q : z_j \geq 0}$; we then call $\leq$ the \emph{natural order} on $\R^q$.
When we say $z^1 \lneq_C z^2$, we mean $z^1 \leq_C z^2$ and $z^1 \neq z^2$.
Note that $\leq_C$ is a partial order if and only if $C$ is pointed, i.e., $C \cap (-C) = \clb{0}$.

Given a nonempty subset $A \subseteq \R^q$, a point~$\bar{z} \in A$ is called \emph{minimal} w.r.t.\ $\leq_C$ (or \emph{$C$-minimal}) in~$A$ if
\begin{equation}
    \label{eq:definition_minimal_point}
    \nexists\, z \in A \colon z \lneq_C \bar{z}
\end{equation}
or, equivalently,
\begin{equation}
    \label{eq:definition_minimal_point_alternativ}
    \forall\, z \in A \colon
    z \leq_C \bar{z} \Longrightarrow z = \bar{z}
    \period
\end{equation}
The set of all minimal points of~$A$ is denoted by $\Min_C(A)$.
Note that a point~$\bar{z} \in A$ is a minimal point of~$A$ if and only if there is no distinct point~$z \in A$ being an element of $\bar{z} - C$ (with $\bar{z} - C$ denoting the Minkowski sum of $\clb{\bar{z}}$ and $-C$).
Therefore, the set of minimal points can be written as
\begin{align}
    \notag
    \Min_C(A)
    &= \set{\bar{z} \in A}{A \cap (\bar{z} - C) = \clb{\bar{z}}} \\
    \label{eq:minimal_points_set}
    &= \set{\bar{z} \in A}{A \cap (\bar{z} - C \setminus \clb{0}) = \emptyset}
    \!\period
\end{align}
In \cref{fig:minimal_points}, some examples are given.

\begin{figure}[hbt]
    \centering
    \begin{subfigure}{0.49\linewidth}
        \centering
        \begin{tikzpicture}
    \node[below left] at (0, 0) {0};
    \draw[->] (-0.4, 0) -- (3.5, 0) node[below] {$z_1$};
    \draw[->] (0, -0.4) -- (0, 3) node[left] {$z_2$};

    \draw[set, shift = {(1, 0.5)}] (1, 0) -- (2, 0) -- (2, 2) -- (0, 2) -- (0, 1.5) -- (0.25, 0.5) -- cycle;
    \node[shift = {(1, 0.5)}] at (1.25, 1.25) {$A$};

    \draw[minimal points, shift = {(1, 0.5)}] (0, 1.5) -- (0.25, 0.5) -- (1, 0);

    \draw[ordering cone, shift = {(1, 0.5)}] (0.2, 0.7)+(-1, 0) -- (0.2, 0.7) -- +(0, -1) node[left]{$\bar{z} - \R^q_+$};

    \draw[point, shift = {(1, 0.5)}] (0.2, 0.7) node[above right = -1pt]{$\bar{z}$} circle;

\end{tikzpicture}
        \caption{}
        \label{subfig:minimal_points_1}
    \end{subfigure}%
    \begin{subfigure}{0.49\linewidth}
        \centering
        \begin{tikzpicture}
    \node[below left] at (0, 0) {0};
    \draw[->] (-0.4, 0) -- (3.5, 0) node[below] {$z_1$};
    \draw[->] (0, -0.4) -- (0, 3) node[left] {$z_2$};

    \draw[set, shift = {(1, 0.5)}] (1, 0) -- (2, 0) -- (2, 2) -- (0, 2) -- (0, 1.5) -- (0.25, 0.5) -- cycle;
    \node[shift = {(1, 0.5)}] at (1.25, 1.25) {$A$};

    \draw[minimal points, shift = {(1, 0.5)}] (0, 2) -- (0, 1.5) -- (0.25, 0.5) -- (1, 0) -- (2, 0);

    \draw[ordering cone, shift = {(1, 0.5)}] (0.2, 0.7)+(-170:1) -- (0.2, 0.7) -- +(-115:1) node[right]{$\bar{z} - C$};

    \draw[point, shift = {(1, 0.5)}] (0.2, 0.7) node[above right = -1pt]{$\bar{z}$} circle;

\end{tikzpicture}
        \caption{}
        \label{subfig:minimal_points_2}
    \end{subfigure}
    \caption{
        Examples for a minimal point~$\bar{z}$ of a subset~$A \subseteq \R^2$ w.r.t.\ (a)~the natural order and (b)~a partial order generated by another cone~$C$.
        $\Min_C(A)$ is highlighted by bold lines.
        The (translated negative) ordering cones are shaded.
    }
    \label{fig:minimal_points}
\end{figure}

Now, let $C$ be a polyhedral convex cone, i.e., $C = \inlineset{z \in \R^q}{B z \geq 0}$ for some matrix $B \in \R^{r \times q}$.
Furthermore, let $S \subseteq \R^n$ be a polyhedron and $P \in \R^{q \times n}$ a matrix.
Then, we call
\hypertarget{P}{
    \begin{flalign}
        \label{eq:vlp}
        &\text{(P)}
        &\conemin{C} \quad P x \qq{s.t.} x \in S
        &&
    \end{flalign}
}%
\newcommand*{\Pref}{(\hyperlink{P}{P})}%
\newcommand*{\EffPref}{\Eff(\hyperlink{P}{\text{P}})}%
\newcommand*{\gEffPref}{\gEff(\hyperlink{P}{\text{P}})}%
a \emph{vector linear program} (VLP).
$S$ is referred to as the \emph{feasible set}, its image~$P[S]$ under~$P$ is named the \emph{image} of~\Pref, and~$P[S] + C$ is called the \emph{upper image} of~\Pref.
\emph{Solving} the VLP is the task to find feasible points~$\bar{x} \in S$ whose images under~$P$ are $C$-minimal in the image of~\Pref, i.e., $P \bar{x} \in \Min_C P[S]$.
These points are called \emph{efficient} and form the set
\begin{align}
    \notag
    \EffPref
    \coloneq&\; P^{-1}[\Min_C P[S]] \cap S \\
    \label{eq:efficient_points_definition}
    =&\, \set{\bar{x} \in S}{\nexists\, x \in S \colon P x \lneq_C P \bar{x}}
    \!\comma
\end{align}
where $P^{-1}[A] = \inlineset{x \in \R^n}{\exists\, z \in A : z = P x}$ denotes the preimage of~$A \subseteq \R^q$ under~$P$.
The transposed rows of~$P$ are referred to as the \emph{objectives} of the program.
A VLP is called \emph{multi-objective linear program} (MOLP) if one minimizes w.r.t.\ the natural order.
For an MOLP, efficient points can be considered as the best compromises over the objectives.
Note that maximization problems can be simply obtained by either considering $-P$ as the objective matrix or $-C$ as the ordering cone.

In this article, we use the solution concept for VLPs from \textcite{Loehne_Weissing_2017,Loehne_2011}, defined as follows.
A pair $(\bar{S}, \hat{S})$ of nonempty sets is called an \emph{infimizer} of~\Pref\ if $\bar{S}$ consists of feasible points, $\hat{S}$ consists of directions of reccession of $S$ (also called feasible directions), and it generates the upper image in the sense that
\begin{equation}
    \label{eq:definition_infimizer}
    \conv\!\rdb{P\!\sqb{\bar{S}}} + \cone\!\rdb{P\!\sqb{\hat{S}}} + C \supseteq P[S] + C \period
\end{equation}
Here, \enquote{$\conv$} stands for the convex hull, and \enquote{$\cone$} denotes the conic hull (also called the nonnegative hull).
If, additionally,
\begin{equation}
    \label{eq:definition_solution}
    P\!\sqb{\bar{S}} \subseteq \Min_C P[S] \qq{and} P\!\sqb{\hat{S}} \subseteq \Min_C P\!\sqb{\recc S} \!\comma
\end{equation}
where \enquote{$\recc$} denotes the recession cone (cf.~\textcite{Rockafellar_1970}), then $(\bar{S}, \hat{S})$ is referred to as a \emph{solution} of~\Pref.

As can be seen in \cref{fig:minimal_points_3}, it is possible that even nonempty compact sets do not have minimal points.
This may occur if and only if the ordering cone is not pointed, i.e., if the ordering relation is not antisymmetric.
To fix this, one can introduce the so-called generalized minimality.
Note that there is no unique name for this concept.
For instance, \textcite{Jahn_2011} directly introduces minimality in the generalized form.
We decided to distinguish both concepts, as \textcite{Goepfert_Nehse_1990} do.

\begin{figure}[hbt]
    \centering
    \begin{tikzpicture}
    \node[below left] at (0, 0) {0};
    \draw[->] (-0.4, 0) -- (3.5, 0) node[below] {$z_1$};
    \draw[->] (0, -0.4) -- (0, 3) node[left] {$z_2$};

    \draw[set, shift = {(1, 0.5)}] (1, 0) -- (2, 0) -- (2, 2) -- (0, 2) -- (0, 1.5) -- (0.25, 0.5) -- cycle;
    \node[shift = {(1, 0.5)}] at (1.25, 1.25) {$A$};

    \draw[minimal points, dash pattern = on 1.91mm off 0.89mm, shift = {(1, 0.5)}] (0, 1.5) -- (0.25, 0.5);

    \draw[ordering cone line, shift = {(1, 0.5)}] (-0.15, 2.1) -- (0.4, -0.1);
    \fill[ordering cone fill, shift = {(1, 0.5)}] (-0.15, 2.1)+(-0.6,-0.15) -- (-0.15, 2.1) -- (0.4, -0.1) -- +(-0.6,-0.15);

\end{tikzpicture}
    \caption{
        An example for an ordering cone (shaded) that generates a preorder but not a partial order.
        Here, $A \subseteq \R^2$ has no minimal points.
        The set of generalized minimal points is the dashed edge of~$A$.
    }
    \label{fig:minimal_points_3}
\end{figure}

For a nonempty subset $A \subseteq \R^q$, a point~$\bar{z} \in A$ is called \emph{generalized minimal} w.r.t.\ $\leq_C$ in~$A$ if
\begin{equation}
    \label{eq:generalized_minimal_points}
    \forall\, z \in A \colon
    z \leq_C \bar{z} \Longrightarrow \bar{z} \leq_C z
    \period
\end{equation}
The set of all generalized minimal points of~$A$ is denoted by $\gMin_C(A)$.
Analogous to \cref{eq:efficient_points_definition}, one can extend VLPs to determine generalized efficient points, i.e.,
\begin{equation}
    \label{eq:generalized_efficient_points_definition}
    \gEffPref
    \coloneq P^{-1}\!\sqb{\gMin_C P[S]} \cap S
    \period
\end{equation}

It follows directly from the definition that minimal points are always generalized minimal points.
Additionally, one can easily see that generalized minimality coincides with minimality if $C$ is pointed as $\leq_C$ then is antisymmetric.
The set of generalized minimal points can alternatively be expressed as
\begin{align}
    \notag
    \gMin_C(A)
    &= \set{\bar{z} \in A}{A \cap (\bar{z} - C \setminus (C \cap (-C))) = \emptyset} \\
    \label{eq:generalized_minimal_points_set}
    &= \set{\bar{z} \in A}{A \cap (\bar{z} - C \setminus (-C)) = \emptyset}
    \!\period
\end{align}
By this generalization, the elements of the dashed set in \cref{fig:minimal_points_3} can be considered minimal.

    \section{Reduction of the Number of Objectives}
\label{sec:reduction_of_objectives}

When determining efficient solutions of an MOLP
\hypertarget{molp}{
    \begin{flalign}
        \label{eq:molp}
        &\text{(MOLP)}
        &\min \quad P x \qq{s.t.} x \in S
        \comma
        &&
    \end{flalign}
}%
\newcommand*{\molpref}{(\hyperlink{molp}{MOLP})}%
\newcommand*{\Effmolpref}{\Eff(\hyperlink{molp}{\text{MOLP}})}%
\newcommand*{\gEffmolpref}{\gEff(\hyperlink{molp}{\text{MOLP}})}%
many algorithms primarily operate in the image space of the problem (see \cref{sec:introduction} for examples).
In practice, one can observe that the computation time grows rapidly when increasing the number of objectives.
Consequently, it is of huge interest to reduce this number.

If the objective matrix does not have a full row rank, i.e., $k \coloneq \rk(P) < q$, the objectives are linearly dependent.
This means that the set of objectives contains less information than initially assumed.
Hence, one might ask whether the number of objectives can be reduced to~$k$ without losing information, that is, without altering the solutions of the MOLP.
One approach could be to remove $q - k$~objectives such that the remaining $k$~objectives are linearly independent.
As discussed in \cref{sec:nonessential_objectives}, one can withdraw an objective generally if it is a conic (i.e., nonnegative) combination of the others.
\Cref{ex:objective_removal}, however, shows that this is not true for non-conic combinations as it significantly changes the solution set.

\begin{example}
    \label{ex:objective_removal}
    Consider the MOLP
    \hypertarget{molpex}{
        \begin{flalign}
            \label{eq:objective_removal_MOLP}
            &\text{($\text{MOLP}_1$)}
            &\min \quad
            \begin{pmatrix}
                -1 &  0 &  0 \\
                0 &  1 &  0 \\
                1 & -1 &  0
            \end{pmatrix}
            x
            \qq{s.t.} x \in [0, 1]^3
            \period
            &&
        \end{flalign}
    }%
    \newcommand*{\Effmolpexref}{\Eff(\hyperlink{molpex}{\text{$\text{MOLP}_1$}})}%
    \newcommand*{\gEffmolpexref}{\gEff(\hyperlink{molpex}{\text{$\text{MOLP}_1$}})}%
    It holds that $\Effmolpexref = [0, 1]^3$.
    Removing the last objective yields a full-rank objective matrix:
    \hypertarget{molpexrem}{
        \begin{flalign}
            \label{eq:objective_removal_MOLP_removed}
            &\text{($\text{MOLP}_2$)}
            &\min \quad
            \begin{pmatrix}
                -1 &  0 &  0 \\
                0 &  1 &  0
            \end{pmatrix}
            x
            \qq{s.t.} x \in [0, 1]^3
            \period
            &&
        \end{flalign}
    }%
    \newcommand*{\molpexremref}{(\hyperlink{molpexrem}{$\text{MOLP}_2$})}%
    \newcommand*{\Effmolpexremref}{\Eff(\hyperlink{molpexrem}{\text{$\text{MOLP}_2$}})}%
    \newcommand*{\gEffmolpexremref}{\gEff(\hyperlink{molpexrem}{\text{$\text{MOLP}_2$}})}%
    However, $\Effmolpexremref = \{1\} \times \{0\} \times [0, 1] \neq \Effmolpexref$.
\end{example}
\newcommand*{\molpexref}{(\hyperlink{molpex}{$\text{MOLP}_1$})}%

For this reason, another approach to reduce the number of objectives is necessary in order to completely remove linear dependence.
As a first step, we decompose the objective matrix~$P$ into a product of two matrices~$L \in \R^{q \times k}$ and~$R \in \R^{k \times n}$ where $\rk(L) = \rk(R) = k$, i.e., $P = LR$.
This is always possible, e.g., by using Gauß’ algorithm or a singular value decomposition.
Note that the decomposition is not unique as it depends on the method employed to obtain it.
Moreover, $L$ is always injective since it has full column rank, and $R$ is always surjective due to its full row rank.
Now, we define the set
\begin{equation}
    \label{eq:matrix_cone}
    C \coloneq \set{z \in \R^k}{Lz \geq 0}
\end{equation}
which is a nonempty convex cone as $0 \in C$, and $z \in C$ and $\lambda \geq 0$ imply $L (\lambda z) = \lambda (Lz) \geq 0$, that is, $\lambda z \in C$.
Since $C \cap (-C) = \inlineset{z \in \R^k}{Lz = 0} = \clb{0}$ holds by the linear independence of the columns of~$L$, $C$ is pointed.
Therefore, $C$ defines a partial order~$\leq_C$.

Using the decomposition of~$P$, the linearity of~$L$, and \cref{eq:matrix_cone}, we obtain the following equivalence for any $x^1, x^2 \in \R^n$:
\begin{align}
    \notag
    P x^1 \leq P x^2
    &\;\Longleftrightarrow\; L R x^1 \leq L R x^2 \\
    \notag
    &\;\Longleftrightarrow\; L (R x^2 - R x^1) \geq 0 \\
    \label{eq:matrix_partial_order_P}
    &\;\Longleftrightarrow\; R x^1 \leq_C R x^2
    \period
\end{align}
Thus, we can compare two feasible points under $R$ in $\R^k$ instead of $P$ in $\R^q$.
Using $R$ as the objective matrix and $C$ as the ordering cone of a new VLP
\hypertarget{vlp}{
    \begin{flalign}
        \label{eq:vlp_from_molp}
        &\text{(VLP)}
        &\conemin{C} \quad R x \qq{s.t.} x \in S
        \comma
        &&
    \end{flalign}
}%
\newcommand*{\vlpref}{(\hyperlink{vlp}{VLP})}%
\newcommand*{\Effvlpref}{\Eff(\hyperlink{vlp}{\text{VLP}})}%
\newcommand*{\gEffvlpref}{\gEff(\hyperlink{vlp}{\text{VLP}})}%
the number of objectives decreases from $q$ to $k$.

The following theorem states that \molpref\ and \vlpref\ are equivalent in the sense that their sets of efficient points coincide.
Furthermore, the minimal points are related just by applying~$L$.
As discussed in \cref{sec:introduction}, this theorem has already been stated and proven in the literature.

\begin{theorem}
    \label{thm:equivalence_molp_vlp}
    Consider the problem~\molpref\ with a decomposition of the objective matrix~$P = LR$ into the matrices~$L \in \R^{q \times k}$ and~$R \in \R^{k \times n}$ where $k = \rk(P) = \rk(L) = \rk(R)$.
    Then, for the problem~\vlpref\ the following statements hold:
    \begin{equation}
        \label{eq:equivalence_molp_vlp_eff}
        \Effmolpref = \Effvlpref
    \end{equation}
    and
    \begin{equation}
        \label{eq:equivalence_molp_vlp_min}
        \Min P[S] = L[\Min_C R[S]]
        \period
    \end{equation}
\end{theorem}
\begin{proof}
    To prove \cref{eq:equivalence_molp_vlp_eff}, take some $\bar{x} \in \Effvlpref$.
    According to \cref{eq:efficient_points_definition}, there is no point~$x \in S$ with both $R x \neq R \bar{x}$ and $R x \leq_C R \bar{x}$.
    Due to \cref{eq:matrix_partial_order_P}, one can rewrite the inequality as $P x \leq P \bar{x}$.
    As, by assumption, $L$ is injective, $R x \neq R \bar{x}$ is equivalent to $L R x \neq L R \bar{x}$, that is, $P x \neq P \bar{x}$.
    The equivalence of $\bar{x} \in \Effvlpref$ and $\bar{x} \in \Effmolpref$ then follows from \cref{eq:efficient_points_definition}, again.

    \Cref{eq:equivalence_molp_vlp_min} follows directly from \cref{eq:equivalence_molp_vlp_eff} as follows.
    With \cref{eq:efficient_points_definition} we get
    \begin{equation}
        \label{eq:equivalence_molp_vlp_minimal_points_1}
        \Min P[S] = P[\Effmolpref] = P[\Effvlpref]
        \period
    \end{equation}
    Applying $P = LR$ and, again, \cref{eq:efficient_points_definition} yields
    \begin{equation}
        \label{eq:equivalence_molp_vlp_minimal_points}
        P[\Effvlpref] = LR\!\sqb{R^{-1}[\Min_C R[S]] \cap S} = L[\Min_C R[S]]
        \period
    \end{equation}
    In order to show that $R R^{-1}$ can be omitted in the last equation, we employed $\Min_C R[S] \subseteq R[S]$.
    \qedhere
\end{proof}

Next to the aforementioned speedup in computing time, which we will discuss in more detail in \cref{sec:examples}, another crucial practical benefit arises from \cref{thm:equivalence_molp_vlp}.
It becomes much easier for decision makers to get a feeling for the actual problem since the number of objectives decreases, and plotting the problem (e.g., via net diagrams) becomes simpler.
In particular, the objectives’ linear dependence means that the objectives contain less information than one primarily has supposed.
Due to linear dependencies, the image of the feasible set just lies in a $k$-dimensional subspace of $\R^q$, and the transformation into a VLP is nothing else than a projection onto $\R^k$.
Nevertheless, particular linear combinations of objectives might be of special practical interest---although they actually are redundant for the solution, one can output them additionally.

Decision makers become able to interpret the problem \vlpref\ by considering the problem
\hypertarget{molpinterpretation}{
    \begin{flalign}
        \label{eq:molp_interpretation}
        &\text{($\text{MOLP}^\circ$)}
        &\min \quad L z \qq{s.t.} z \in R[S]
        \comma
        &&
    \end{flalign}
}%
\newcommand*{\molpinterpretationref}{(\hyperlink{molpinterpretation}{$\text{MOLP}^\circ$})}%
\newcommand*{\Effmolpinterpretationref}{\Eff(\hyperlink{molpinterpretation}{\text{MOLP}^\circ})}%
\newcommand*{\gEffmolpinterpretationref}{\gEff(\hyperlink{molpinterpretation}{\text{MOLP}^\circ})}%
that is equivalent to \molpref\ and, due to \cref{thm:equivalence_molp_vlp}, also to \vlpref.
A feasible point~$\bar{x}$ is efficient for \vlpref, i.e., $\bar{z} = R \bar{x}$ is $C$-minimal, if and only if $\bar{z} = R \bar{x}$ is efficient for \molpinterpretationref.
Hence, at an efficient point, it is not possible to decrease any objective~$\ell^i$ over~$R[S]$ without increasing another objective~$\ell^j$, where $\ell^1, \ldots, \ell^q$ denote the transposed rows of $L$.
By construction of~$C$, the objectives~$\ell^1, \ldots, \ell^q$ are normals to the supporting hyperplanes of~$C$.
As we discuss in \cref{sec:nonessential_objectives}, only the objectives orthogonal to some extremal direction of~$C$ are needed to be considered here.
Since the rows of $L$ and $P$ correspond one-to-one due to the injectivity of $R^\transposed$, the behavior of the objectives of \molpinterpretationref\ translates directly to that of \molpref.
That is, minimizing an objective~$\ell^i$ over~$R[S]$ minimizes the corresponding objective~$p^i$ over~$S$, and vice versa.
Decision makers benefit from this as they can vary the objectives of \molpinterpretationref\ over a $k$-dimensional space instead of those of \molpref\ over an $n$-dimensional space.
This relationship can also be used to visualize \vlpref; an example is illustrated in \cref{fig:example_interpretation}.

\begin{figure}[hbt]
    \centering
    \begin{tikzpicture}[scale = 1.5]
    \node[below left] at (0, 0) {0};
    \draw[->] (-0.4/1.5, 0) -- (4.4, 0) node[below] {$z_1 = (r^1)^\transposed x$};
    \draw[->] (0, -0.4/1.5) -- (0, 3.15) node[left] {$z_2 = (r^2)^\transposed x$};

    \coordinate (A) at (1.25, 0.75);
    \coordinate (B) at (2.25, 0.5);
    \coordinate (C) at (2.75, 1.5);
    \coordinate (D) at (2.25, 2.25);
    \coordinate (E) at (1.5, 2.25);
    \coordinate (F) at (0.75, 1.5);
    \draw[set] (A) -- (B) -- (C) -- (D) -- (E) -- (F) -- cycle;
    \node at (1.9, 1.6) {$R[S]$};

    \draw[upper image] (B) -- +(2, 1);
    \draw[upper image] (F) -- +(0.75, 1.5);
    \node at (3.5, 1.75) {$R[S] + C$};

    \coordinate (P) at ($0.6*(A) + 0.4*(F)$);
    \draw[ordering cone] (P)+($sqrt(5)/5*(-2, -1)$) -- (P) -- +($sqrt(5)/5*(-1, -2)$) node[right]{$\bar{z} - C$};

    \draw[minimal points] (F) -- (A) -- (B);
    \draw[point scaled] (P) node[right]{$\bar{z}$} circle;

    \draw[objective] (F) +($sqrt(5)/20*(-1, -2)$) -- +($sqrt(5)/20*(1, 2)$);
    \draw[objective, ->] (F) -- node[above right = -3pt]{$\ell^1$} +($sqrt(5)/10*(2, -1)$);
    \draw[objective] (B) +($sqrt(5)/20*(-2, -1)$) -- +($sqrt(5)/20*(2, 1)$);
    \draw[objective, ->] (B) -- node[above right = -3pt]{$\ell^2$} +($sqrt(5)/10*(-1, 2)$);

\end{tikzpicture}
    \caption{%
        Visualization of a \protect\vlpref\ in the image space and the corresponding problem \protect\molpinterpretationref\ in the space of variables.
        The arrows show two objectives $\ell^1$ and $\ell^2$ at their minimum over $R[S]$.
        The dashed lines indicate the upper image $R[S] + C$ of \protect\vlpref, the thick ones show the minimal points of \protect\vlpref, which coincide with the efficient points of \protect\molpinterpretationref.
        By $r^1$ and $r^2$, the transposed rows of $R$ are denoted.
    }
    \label{fig:example_interpretation}
\end{figure}

One can implement the method into a solution algorithm for MOLP using the following distinction of cases, provided the rank of $P$ has already been calculated.
If $k = n < q$, a trivial decomposition (i.e., $L = P$ and $R$ being the identity matrix in $\R^{n \times n}$) leads to a reduction in the number of objectives.
If $P$ has full rank and $k = q \leq n$, the procedure has no advantage and can be omitted.
In any other case in which $P$ does not have a full rank, one should always compute a decomposition of $P$ and solve the corresponding VLP.

\begin{remark}
    \label{rem:cone_dimension}
    The dimension of the ordering cone is not uniquely determined by the rank and the size of the objective matrix.
    For instance, consider the matrices
    \begin{equation}
        \label{eq:cone_dimension_example}
        L_1 =
        \begin{pmatrix}
            1 & 0 & 1 \\
            0 & 1 & 1
        \end{pmatrix}^{\!\transposed}
        \qq{and}
        L_2 =
        \begin{pmatrix}
            1 & 0 & -1 \\
            0 & 1 & -1
        \end{pmatrix}^{\!\transposed}
        \!\period
    \end{equation}
    Note that these matrices are obtained by decomposing some rank-\num{2} objective matrices, which can be constructed by multiplying $L_1$ and $L_2$ by some $\num{2} \times n$ matrix from the right.
    The resulting ordering cones are $C_1 = \R^2_+$ and $C_2 = \{0\}$, with dimensions~\num{2} and~\num{0}, respectively.

    However, the objective matrix does determine the dimension of the ordering cone uniquely.
    Consider the cone~$C_P = \inlineset{x \in \R^n}{P x \geq 0}$ induced by the objective matrix.
    We can split the representation of~$C_P$ into implicit equations (that is, inequalities that are fulfilled with equality for all $x \in C_P$) and proper inequalities (i.e., inequalities~$p^\transposed x \geq 0$ for which there exists $\bar{x} \in C_P$ such that $p^\transposed \bar{x} > 0$).
    Denoting the rows of~$P$ corresponding to the implicit equations by~$P_\text{eq} \in \R^{s \times n}$ for some $s \in \N_0$ and those corresponding to proper inequalities by~$P_\text{\,ineq} \in \R^{(q - s) \times n}$, we obtain
    \begin{equation}
        \label{eq:cone_dimension_C_P}
        C_P = \set{x \in \R^n}{P_\text{eq} x = 0} \cap \set{x \in \R^n}{P_\text{\,ineq} x \geq 0} \!\period
    \end{equation}
    Due to the surjectivity of the matrix~$R$ in the decomposition~$P = LR$, the representation of the ordering cone from \cref{eq:matrix_cone} can be split similarly where $P_\text{eq} = L_\text{eq} R$ and $P_\text{\,ineq} = L_\text{ineq} R$:
    \begin{equation}
        \label{eq:cone_dimension_C_L}
        C = \set{x \in \R^n}{L_\text{eq} x = 0} \cap \set{x \in \R^n}{L_\text{ineq} x \geq 0} \!\period
    \end{equation}
    Since the linear hull of~$C$ is exactly the first part of this intersection, the dimension of~$C$ is
    \begin{equation}
        \label{eq:cone_dimension}
        \dim(C) = \dim(\ker(L_\text{eq})) = k - \rk(L_\text{eq}) = k - \rk(P_\text{eq}) \comma
    \end{equation}
    where we set the rank of a zero-row matrix to zero and, again, used the surjectivity of~$R$ for the last equation.
    Moreover, as the linear hull of a polyhedral cone is the orthogonal complement of the lineality space of its polar cone~\parencite[Theorem~14.6]{Rockafellar_1970}, $\rk(L_\text{eq})$ and $\rk(P_\text{eq})$ coincide with the dimension of the lineality spaces of~$\cone(L^\transposed)$ and~$\cone(P^\transposed)$, respectively.
    This means that the presence of objectives whose conic hull forms a nontrivial subspace of~$\R^n$ induces a dimension reduction of the ordering cone of~\vlpref.

    In conclusion, the dimension of the ordering cone can be calculated by determining the rank of~$P_\text{eq}$ or~$L_\text{eq}$, or by determining the dimension of the lineality spaces of~$\cone(P^\transposed)$ or~$\cone(L^\transposed)$.
    Additionally, VLP solution software must be able to handle ordering cones that are not solid, that is, cones that do not have a full dimension or, equivalently, have an empty interior.
\end{remark}

\begin{remark}
    \label{rem:extension}
    The method is not limited to MOLP but can be applied to all multi-objective programs with linear objective functions, independent of the structure of the feasible set and the ordering cone.
    In particular, consider the following VLP
    \hypertarget{vlpmethodstart}{
        \begin{flalign}
            \label{eq:vlpmethodstart}
            &\text{($\text{VLP}'$)}
            &\conemin{C'} \quad P x \qq{s.t.} x \in S
            &&
        \end{flalign}
    }%
    \newcommand*{\vlpmethodstartref}{(\hyperlink{vlpmethodstart}{$\text{VLP}'$})}%
    \newcommand*{\Effvlpmethodstartref}{\Eff(\hyperlink{vlpmethodstart}{\text{VLP}'})}%
    \newcommand*{\gEffvlpmethodstartref}{\gEff(\hyperlink{vlpmethodstart}{\text{VLP}'})}%
    where $C' = \inlineset{z' \in \R^q}{B z' \geq 0}$ for some $B \in \R^{r \times q}$, and $P \in \R^{q \times n}$ with $\rk(P) = k$.
    Take a decomposition of the objective matrix into the matrices~$L \in \R^{q \times k}$ and~$R \in \R^{k \times n}$ where $k = \rk(P) = \rk(L) = \rk(R)$.
    Then, for the problem
    \hypertarget{vlpmethodend}{
        \begin{flalign}
            \label{eq:vlpmethodend}
            &\text{($\text{VLP}''$)}
            &\conemin{C''} \quad R x \qq{s.t.} x \in S
            &&
        \end{flalign}
    }%
    \newcommand*{\vlpmethodendref}{(\hyperlink{vlpmethodend}{$\text{VLP}''$})}%
    \newcommand*{\Effvlpmethodendref}{\Eff(\hyperlink{vlpmethodend}{\text{VLP}''})}%
    \newcommand*{\gEffvlpmethodendref}{\gEff(\hyperlink{vlpmethodend}{\text{VLP}''})}%
    with ordering cone~$C'' = \inlineset{z'' \in \R^k}{B L z'' \geq 0}$, the following statements hold:
    \begin{equation}
        \label{eq:equivalence_vlp_vlp_eff}
        \Effvlpmethodstartref = \Effvlpmethodendref
    \end{equation}
    and
    \begin{equation}
        \label{eq:equivalence_vlp_vlp_min}
        \Min_{C'} P[S] = L[\Min_{C''} R[S]]
        \period
    \end{equation}
    This follows similarly to the proof of \cref{thm:equivalence_molp_vlp}.
\end{remark}

In the remaining part of this section, we want to discuss the requirement of injectivity of~$L$ and provide a generalization of \cref{thm:equivalence_molp_vlp}.
Decomposing~$P$ into $L \in \R^{q \times k}$ and $R \in \R^{k \times n}$ such that $\rk(P) \leq \rk(L) < k$ yields a cone~$C$ that is not pointed since $C \cap (-C) = \ker(L)$ is a proper superset of~$\{0\}$.
The proof of \cref{thm:equivalence_molp_vlp} shows that $\Effvlpref \subseteq \Effmolpref$ always holds as the injectivity of $L$ is not necessary for the proof of this inclusion.
One can easily construct examples where the equality is not true.
For instance, we can take a bounded feasible set~$S$ and a matrix~$L$ such that $C \cap (-C)$ is parallel to a non-vertex face of $R[S]$.
Then, if $C$ has the right orientation, no point of this face is $C$-minimal due to the missing antisymmetry of the ordering relation, cf.\ \cref{fig:minimal_points_3}.
Moreover, there are no other minimal points, so $\Effvlpref = \emptyset$.
As $S$ is compact, $P[S] = L[R[S]]$ has minimal points w.r.t.\ the natural order.
Thus, $\Effmolpref \neq \emptyset$.
In conclusion, the injectivity of $L$ is necessary for \cref{thm:equivalence_molp_vlp}.

In this sketch of an example, considering generalized minimal points will fix the problem.
Indeed, this approach resolves it in general.
We will prove this in \cref{thm:equivalence_molp_vlp_generalized} below.

\begin{theorem}
    \label{thm:equivalence_molp_vlp_generalized}
    Consider the problem~\molpref\ with a decomposition of the objective matrix~$P = LR$ into the matrices~$L \in \R^{q \times k}$ and~$R \in \R^{k \times n}$ where $k \geq \rk(P)$.
    Then, for the problem~\vlpref\ the following statements hold:
    \begin{equation}
        \label{eq:equivalence_molp_vlp_generalized_eff}
        \Effmolpref = \gEffvlpref
    \end{equation}
    and
    \begin{equation}
        \label{eq:equivalence_molp_vlp_generalized_min}
        \Min P[S] = L[\gMin_C R[S]]
        \period
    \end{equation}
\end{theorem}
\begin{proof}
    To show the first equation, take some $\bar{x} \in \Effmolpref$.
    Because the ordering cone $\R_+^q$ of \molpref\ is pointed, the set of efficient points coincides with the set of generalized efficient points, as discussed in \cref{sec:problem_setting_preliminaries}.
    Thus, $\bar{x} \in \Effmolpref$ is equivalent to $\bar{x} \in \gEffmolpref$.
    According to \cref{eq:generalized_minimal_points}, for any $x \in S$ satisfying $P x \leq P \bar{x}$ follows $P \bar{x} \leq P x$.
    Note that \cref{eq:matrix_partial_order_P} also holds if $L$ is not injective since only the definitions of $C$ and $\leq_C$ were used there.
    Applying \cref{eq:matrix_partial_order_P}, we obtain that $R x \leq_C R \bar{x}$ implies $R \bar{x} \leq_C R x$ for all $x \in S$.
    This is equivalent to $\bar{x} \in \gEffvlpref$.
    The second equation follows similarly to \cref{thm:equivalence_molp_vlp}.
    \qedhere
\end{proof}

\cref{thm:equivalence_molp_vlp} can be viewed as a corollary of \cref{thm:equivalence_molp_vlp_generalized} since efficient and generalized efficient points coincide for pointed ordering cones.
\cref{rem:extension} also applies to a statement similar to \cref{thm:equivalence_molp_vlp_generalized} regarding the equivalence of suitable VLP.

\begin{remark}
    \label{rem:equivalence_molp_vlp_generalized_comparison_to_Thoai_2012}
    In \textcite[Proposition~1]{Thoai_2012}, the special case of $L = P$ and $R$ being the identity matrix was proven through a slight modification of the ordering cone:
    \begin{equation}
        \label{eq:matrix_cone_generalized}
        \tilde{C}
        \coloneq \set{z \in \R^k}{P z \gneq 0} \cup \clb{0}
        = (C \setminus (-C)) \cup \clb{0}
        \period
    \end{equation}
    That is, the lineality directions of $C$ are excluded.
    The notion of minimality induced by~$\tilde{C}$ coincides with the generalized minimality w.r.t.\ $\leq_C$, introduced in \cref{eq:generalized_minimal_points}.
    From a theoretical point of view, it is possible to treat the objective matrix~$P$ in two ways: by using $\tilde{C}$ as the ordering cone, as~\textcite{Thoai_2012} does, or the concept of generalized minimality, as we do in the proof of the extended \cref{thm:equivalence_molp_vlp}.
\end{remark}

From a practical point of view, \cref{thm:equivalence_molp_vlp_generalized} is not relevant since the number of objectives of~\vlpref\ is not minimal in this case.
However, the generalization is a valuable proving technique demonstrated in the next section.

    \section{Nonessential Objectives}
\label{sec:nonessential_objectives}

It is possible that some objectives of \molpref\ are not relevant for determining efficient points---they are obsolete.
\Cref{subfig:example_nonessential_objective_1} gives an example where one can remove, for instance, the objective $p^2$ without affecting the set of efficient points.
This concept of so-called nonessential (or redundant) objectives has been developed by Thomas Gal and Heiner Leberling~\parencite{Gal_Leberling_1977,Gal_1980}.
In this section we aim to present this concept based on \textcite{Gal_Leberling_1977,Gal_1980,Thoai_2012} and relate it to the previous section.
First, the following introduces the terms of nonessential objective as well as nonessential system of objectives.

\begin{figure}[hbt]
    \centering
    \begin{subfigure}{0.49\linewidth}
        \centering
        \begin{tikzpicture}[scale = 1.5]
    \node[below left] at (0, 0) {0};
    \draw[->] (-0.4/1.5, 0) -- (3.5/1.5, 0) node[below] {$x_1$};
    \draw[->] (0, -0.4/1.5) -- (0, 3.5/1.5) node[left] {$x_2$};

    \coordinate (A) at (0, 0);
    \coordinate (B) at (2, 0);
    \coordinate (C) at (2, 1);
    \coordinate (D) at (1, 2);
    \coordinate (E) at (0, 2);
    \draw[set] (A) -- (B) -- (C) -- (D) -- (E) -- cycle;
    \node at (0.7, 0.7) {$S$};

    \draw[minimal points] (C) -- (D) -- (E);

    \draw[objective] (E) +($sqrt(5)/20*(-2, -1)$) -- +($sqrt(5)/20*(2, 1)$);
    \draw[objective, ->] (E) -- node[right = 1pt]{$p^1$} +($sqrt(5)/10*(1, -2)$);
    \draw[objective] (D) +($sqrt(5)/20*(2, -1)$) -- +($sqrt(5)/20*(-2, 1)$);
    \draw[objective, ->] (D) -- node[left = -3pt]{$p^2$} +($sqrt(5)/10*(-1, -2)$);
    \draw[objective] (C) +($sqrt(13)/52*(2, -3)$) -- +($sqrt(13)/52*(-2, 3)$);
    \draw[objective, ->] (C) -- node[below = -1pt]{$p^3$} +($sqrt(13)/26*(-3, -2)$);
    \draw[objective] (C) +($sqrt(17)/68*(1, -4)$) -- +($sqrt(17)/68*(-1, 4)$);
    \draw[objective, ->] (C) -- node[above left = -3pt]{$p^4$} +($sqrt(17)/34*(-4, -1)$);

\end{tikzpicture}
        \caption{}
        \label{subfig:example_nonessential_objective_1}
    \end{subfigure}
    \begin{subfigure}{0.49\linewidth}
        \centering
        \begin{tikzpicture}[scale = 1.5]
    \node[below left] at (0, 0) {0};
    \draw[->] (-0.4/1.5, 0) -- (3.5/1.5, 0) node[below] {$x_1$};
    \draw[->] (0, -0.4/1.5) -- (0, 3.5/1.5) node[left] {$x_2$};

    \coordinate (A) at (0, 0);
    \coordinate (B) at (2, 0);
    \coordinate (C2) at (1.5, 1.5);
    \coordinate (C1) at (2, 0.5);
    \coordinate (D) at (1, 2);
    \coordinate (E) at (0, 2);
    \draw[set] (A) -- (B) -- (C1) -- (C2) -- (D) -- (E) -- cycle;
    \node at (0.7, 0.7) {$S$};

    \draw[minimal points] (C1) -- (C2) -- (D) -- (E);

    \draw[objective] (E) +($sqrt(5)/20*(-2, -1)$) -- +($sqrt(5)/20*(2, 1)$);
    \draw[objective, ->] (E) -- node[right = 1pt]{$p^1$} +($sqrt(5)/10*(1, -2)$);
    \draw[objective] (D) +($sqrt(5)/20*(2, -1)$) -- +($sqrt(5)/20*(-2, 1)$);
    \draw[objective, ->] (D) -- node[left = -3pt]{$p^2$} +($sqrt(5)/10*(-1, -2)$);
    \draw[objective] (C2) +($sqrt(13)/52*(2, -3)$) -- +($sqrt(13)/52*(-2, 3)$);
    \draw[objective, ->] (C2) -- node[below = -1pt]{$p^3$} +($sqrt(13)/26*(-3, -2)$);
    \draw[objective] (C1) +($sqrt(17)/68*(1, -4)$) -- +($sqrt(17)/68*(-1, 4)$);
    \draw[objective, ->] (C1) -- node[above left = -3pt]{$p^4$} +($sqrt(17)/34*(-4, -1)$);

\end{tikzpicture}
        \caption{}
        \label{subfig:example_nonessential_objective_2}
    \end{subfigure}
    \caption{
        Visualization in the space of variables of two MOLPs for which some objectives can be omitted without affecting the set of efficient points.
        In both cases, the objectives $p^1, \ldots, p^4$ (shown by the arrows at the position of minimum) are the same, where in (b) there is an additional constraint.
        The thick lines indicate the efficient points of the MOLPs.
    }
    \label{fig:example_nonessential_objective}
\end{figure}

Consider the problem \molpref\ and $\Pcal = \{p^1, \ldots, p^q\}$ as the set of its objectives.
The objective $p^{\,j} \in \Pcal$ is called \emph{nonessential} if for the problem
\hypertarget{molpnonessential}{
    \begin{flalign}
        \label{eq:molp_nonessential}
        &\text{($\text{MOLP}^*$)}
        &\min \quad
        \begin{pmatrix}
            \rdb{p^1}^{\!\transposed} \\ \vdots \\ \rdb{p^{\,j - 1}}^{\!\transposed} \\ \rdb{p^{\,j + 1}}^{\!\transposed} \\ \vdots \\ \rdb{p^q}^{\!\transposed}
        \end{pmatrix}
        x
        \qq{s.t.} x \in S
        \comma
        &&
    \end{flalign}
}%
\newcommand*{\molpnonessentialref}{(\hyperlink{molpnonessential}{$\text{MOLP}^*$})}%
\newcommand*{\Effmolpnonessentialref}{\Eff(\hyperlink{molpnonessential}{\text{MOLP}^*})}%
\newcommand*{\gEffmolpnonessentialref}{\gEff(\hyperlink{molpnonessential}{\text{MOLP}^*})}%
$\Effmolpnonessentialref = \Effmolpref$ holds.
Otherwise, $p^{\,j}$ is \emph{essential}.
A subset $\Pcal^* \subseteq \Pcal$ is called a \emph{nonessential system} of objectives if all objectives in~$\Pcal^*$ can be removed from \molpref\ simultaneously in the above sense, i.e., without altering $\Effmolpref$.

It is important to note that a system of nonessential objectives does not have to be a nonessential system of objectives.
For the example in \cref{subfig:example_nonessential_objective_1}, $\{p^2, p^3, p^4\}$ and $\{p^3, p^4\}$ are not nonessential, even though the individual objectives are nonessential themselves.
However, the systems $\{p^2, p^3\}$ and $\{p^2, p^4\}$ are nonessential.

In general, it depends on the feasible set which objectives or systems of objectives, respectively, are nonessential.
For instance, consider the problem in \cref{subfig:example_nonessential_objective_2} that has the same objectives but a slightly modified feasible set.
Here, $p^4$ is essential.
This dependency also holds for the concepts of absolutely and relatively redundant systems of objectives, cf.\ \textcite{Gal_Leberling_1977}.
It is difficult to obtain a geometrical impression of the feasible set, that is, to know its vertices and directions of recession.
If we wanted to find nonessential objectives depending on the feasible set, we would need to know the form of the feasible set in advance.
For this, we would perform a vertex enumeration in $\R^n$.
As $n$ is usually (much) larger than $q$, this would significantly increase computing time---contradicting our intention of decreasing it.

Therefore, a statement not depending on the feasible set is more valuable.
The following theorem shows that a statement independent of the feasible set can be derived for objectives that can be represented by conic combinations of the others.
This result has already been investigated in previously mentioned studies in~\textcite{Gal_Leberling_1977,Thoai_2012}.
Here, we present a proof that uses the reformulation of the MOLP into an equivalent VLP, employing the concept of generalized minimality from \cref{sec:reduction_of_objectives}.
This proof is similar to the one in~\textcite[Corollary~1]{Thoai_2012} that uses the cone from \cref{eq:matrix_cone_generalized}, see \cref{rem:nonessential_objective_conic_combination_comparison_to_Thoai_2012} for more details.
In~\textcite{Gal_Leberling_1977}, \cref{enm:ne_objective_conic_combination} of the theorem has been proven via the concept of scalarization, cf.\ Theorem~4.2 in~\textcite{Loehne_2011}.
\Cref{enm:ne_system_conic_combination} then follows from iterating \cref{enm:ne_objective_conic_combination} over the elements of $\Pcal^*$.
Note that we do not use the concept of scalarization in our proof.

\begin{theorem}
    \label{thm:nonessential_objective_conic_combination}
    Consider the problem \molpref, and let $\Pcal$ be the set of its objectives.
    \begin{enumerate}
        \item The objective $p \in \Pcal$ is nonessential if \label{enm:ne_objective_conic_combination}
        \begin{equation}
            \label{eq:ne_objective_conic_combination}
            p \in \cone(\Pcal \setminus \clb{p})
            \period
        \end{equation}
        \item The subset $\Pcal^* \subseteq \Pcal$ is a nonessential system of objectives if \label{enm:ne_system_conic_combination}
        \begin{equation}
            \label{eq:ne_system_conic_combination}
            \forall\, p \in \Pcal^* \colon
            p \in \cone(\Pcal \setminus \Pcal^*)
            \period
        \end{equation}
    \end{enumerate}
    \vspace{0pt}
\end{theorem}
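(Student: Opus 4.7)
The plan is to derive both claims from \cref{thm:equivalence_molp_vlp_generalized} by exhibiting a single carefully chosen factorization $P = LR$. Since part~\cref{enm:ne_objective_conic_combination} is exactly the special case $\Pcal^* = \clb{p}$ of part~\cref{enm:ne_system_conic_combination}, I would focus on~\cref{enm:ne_system_conic_combination} directly, thereby avoiding the iteration argument used in \textcite{Gal_Leberling_1977} and keeping the proof uniform.

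Let $\bar{\Pcal} \coloneq \Pcal \setminus \Pcal^*$, $k \coloneq \abs{\bar{\Pcal}}$, and fix, for every $p \in \Pcal^*$, nonnegative coefficients $(\lambda_{p, p'})_{p' \in \bar{\Pcal}}$ with $p = \sum_{p' \in \bar{\Pcal}} \lambda_{p, p'} p'$. I would take $R \in \R^{k \times n}$ to collect the surviving objectives---its transposed rows being exactly the elements of $\bar{\Pcal}$---and build $L \in \R^{q \times k}$ row by row: the row of~$L$ indexed by $p' \in \bar{\Pcal}$ is the matching standard basis vector of~$\R^k$, while the row indexed by $p \in \Pcal^*$ is the coefficient vector $(\lambda_{p, p'})_{p' \in \bar{\Pcal}}$. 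A direct check then shows $P = LR$, and $k \geq \rk(P)$ holds because every row of~$P$ lies in the row span of~$R$. \Cref{thm:equivalence_molp_vlp_generalized} thus yields $\Effmolpref = \gEffvlpref$, where \vlpref\ has objective matrix~$R$ and ordering cone $C = \set{z \in \R^k}{L z \geq 0}$.

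The core calculation is now the identification of~$C$: the identity rows of~$L$ contribute the inequalities $z \geq 0$, while each coefficient row contributes $\sum_{p' \in \bar{\Pcal}} \lambda_{p, p'} z_{p'} \geq 0$, and the latter becomes automatic under $z \geq 0$ because every $\lambda_{p, p'} \geq 0$. Hence $C = \R_+^k$, which is pointed, so $\gEff$ and $\Eff$ coincide for \vlpref; moreover, that \vlpref\ is by construction nothing but~\molpnonessentialref. Chaining the two equalities yields $\Effmolpref = \Effmolpnonessentialref$, which is precisely the claim that $\Pcal^*$ is a nonessential system. I expect the real hurdle to be spotting the factorization itself; afterwards everything reduces to the one-line verification that the conic---rather than merely linear---combination assumption forces $C$ to collapse to the standard orthant, which is exactly what makes the VLP reformulation clean here.
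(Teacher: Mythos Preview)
Your argument is correct, and it reaches the conclusion by a genuinely different factorization than the paper's. The paper takes the \emph{trivial} decomposition $L = P$, $R = I_n$, so the associated cone lives in $\R^n$; it then applies \cref{thm:equivalence_molp_vlp_generalized} once to pass to \vlpref, shows that deleting the rows in $\Pcal^*$ from $L = P$ leaves the cone $C = \inlineset{z}{P z \geq 0}$ unchanged (this is where the conic-combination hypothesis enters, exactly as in your redundancy calculation), and applies \cref{thm:equivalence_molp_vlp_generalized} a second time, backwards, to land at the reduced MOLP. Your choice $R = P'$ and $L$ built from identity rows plus the coefficient vectors $(\lambda_{p,p'})$ compresses these two invocations into one: the redundancy argument now manifests as $C = \R_+^k$ directly, so the resulting \vlpref\ \emph{is} already the reduced MOLP and no second passage is needed. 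Both proofs pivot on the identical observation---an inequality that is a conic combination of others is redundant in an H-description---but your version keeps the cone in the low-dimensional space $\R^k$ and yields a pointed cone immediately, which makes the role of generalized versus ordinary efficiency slightly more transparent. One cosmetic point: in the paper the label \molpnonessentialref\ is reserved for the single-deletion problem, so when you invoke it for the system $\Pcal^*$ you are tacitly extending the notation; the meaning is clear, but you may want to write out the reduced problem explicitly.
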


\begin{proof}
    As the objective in a nonessential system $\Pcal^* = \{p\}$ with only one element is, by definition, always nonessential, \cref{enm:ne_objective_conic_combination} follows directly from \cref{enm:ne_system_conic_combination}.
    Therefore, it suffices to show \cref{enm:ne_system_conic_combination}.

    Take a trivial decomposition of the objective matrix $P$, i.e., $L = P$ and $R$ being the $n$-dimensional identity matrix.
    From $L$, one obtains the cone $C$ via \cref{eq:matrix_cone} and thus the associated problem \vlpref\ according to \cref{eq:vlp_from_molp}.
    \Cref{thm:equivalence_molp_vlp_generalized} then yields $\gEffvlpref = \Effmolpref$.

    Now remove all rows from $P$ which are contained in $\Pcal^*$ and obtain the submatrix $P' \in \R^{\cramped{q'} \times n}$.
    As we will prove below, $P'$ also generates $C$.
    Thus, the VLP will not be altered when replacing $P$ by $P'$.
    However, one can apply \cref{thm:equivalence_molp_vlp_generalized} again to the simplified VLP.
    It remains an MOLP
    \hypertarget{molpreduced}{
        \begin{flalign}
            \label{eq:molp_reduced}
            &\text{($\text{MOLP}'$)}
            &\min \quad
            P' x \qq{s.t.} x \in S
            &&
        \end{flalign}
    }%
    \newcommand*{\molpreducedref}{(\hyperlink{molpreduced}{$\text{MOLP}'$})}%
    \newcommand*{\Effmolpreducedref}{\Eff(\hyperlink{molpreduced}{\text{MOLP}'})}%
    \newcommand*{\gEffmolpreducedref}{\gEff(\hyperlink{molpreduced}{\text{MOLP}'})}%
    with just $q - \#\Pcal^*$ of the $q$ original objectives, namely the elements from $\Pcal \setminus \Pcal^*$.
    Due to $\Effmolpreducedref = \gEffvlpref = \Effmolpref$, $\Pcal^*$ forms a nonessential system of objectives.

    It remains to show that the cone $C' = \{z \in \R^n \,\vert\, P' z \geq 0\}$ equals~$C$.
    Note that $P'$ only contains rows from $P$.
    Therefore, $C \subseteq C'$.
    To prove the converse inclusion, take some $z \in C'$.
    For $p \in \Pcal \setminus \Pcal^*$ (i.e., the rows of $P'$) one directly obtains $p^\transposed z \geq 0$.
    According to the assumption $\Pcal^* \subseteq \cone(\Pcal \setminus \Pcal^*)$, for $p \in \Pcal^*$ (i.e., all remaining rows of~$P$) there exists $\lambda \in \R_+^{\cramped{q'}}$ such that $p = (P')^\transposed \lambda$ holds.
    Thus, it follows $p^\transposed z = (\lambda^\transposed P') z = \lambda^\transposed (P' z) \geq 0$.
    Summarized, this yields $P z \geq 0$, and therefore $z \in C$.
    \qedhere
\end{proof}

\begin{remark}
    \label{rem:nonessential_objective_conic_combination_alternative_proof}
    Alternatively, one can prove the statement $C = C'$ via the polar cones $C^* = \cone(\Pcal)$ and $(C')^* = \cone(\Pcal \setminus \Pcal^*)$, respectively.
    Note that the bi-polar cone of a nonempty closed convex cone is the cone itself~\parencite{Rockafellar_1970}.
    Therefore, it suffices to show $C^* = (C')^*$.
    $\Pcal \setminus \Pcal^* \subseteq \Pcal$ directly yields $C^* \supseteq (C')^*$.
    Conversely, the assumption $\Pcal^* \subseteq \cone(\Pcal \setminus \Pcal^*)$ together with the trivial inclusion $\Pcal \setminus \Pcal^* \subseteq \cone(\Pcal \setminus \Pcal^*)$ implies $\Pcal \subseteq \cone(\Pcal \setminus \Pcal^*)$, and thus $C^* \subseteq (C')^*$.
\end{remark}

\begin{remark}
    \label{rem:nonessential_objective_conic_combination_comparison_to_Thoai_2012}
    \textcite[Definition~1]{Thoai_2012} introduces the notion of a \enquote{family of representative criteria} of \molpref.
    This is defined as \cref{eq:ne_system_conic_combination}.
    We can therefore reformulate \cref{enm:ne_system_conic_combination} of \cref{thm:nonessential_objective_conic_combination} as follows: \enquote{The subset $\Pcal^* \subseteq \Pcal$ is a nonessential system of objectives if it is a family of representative criteria.}

    Moreover, the statement $C = C'$ corresponds to Proposition~2 of~\textcite{Thoai_2012}.
    While the inclusion $C \supseteq C'$ follows similarly, proving the converse inclusion $C \subseteq C'$ is easier.
    This is because the cones considered in~\textcite{Thoai_2012} exclude the lineality directions of $C$ and $C'$, respectively, and therefore, an additional condition must be proven.
\end{remark}

According to \cref{thm:nonessential_objective_conic_combination}, for the examples in \cref{fig:example_nonessential_objective} the system $\{p^2, p^3\}$ is non-essential independent of the feasible set since $p^2$ and $p^3$ can be expressed as conic combinations of the other objectives $p^1$ and $p^4$.
Note that \cref{thm:nonessential_objective_conic_combination} only gives a sufficient but not necessary condition~\parencite{Gal_Leberling_1977}; in the example in \cref{subfig:example_nonessential_objective_1}, the objective $p^4$ is non-essential even though it is not a conic combination of the others.
For a both necessary and sufficient condition, we refer to Theorem~2.4 in \textcite{Gal_Leberling_1977}, but this result is rather theoretical since it depends on the feasible set as one can see from the example in \cref{fig:example_nonessential_objective}.

There are plenty of approaches to determine nonessential objectives and minimal generating systems, respectively, cf.~\textcite{Gal_Leberling_1977,Gal_1980,Malinowska_2006,Thoai_2012}.
Alternatively, one can use the transformation of the MOLP into a VLP according to \cref{thm:equivalence_molp_vlp} and the above \cref{thm:nonessential_objective_conic_combination}.
To see how this can be achieved, we first consider a \emph{minimal spanning system} of $\cone(D)$ for a finite set $D \subseteq \R^q$.
This is a subset $D' \subseteq D$ such that $\cone(D') = \cone(D)$ and for any proper subset $D'' \subsetneq D'$ one has $\cone(D'') \subsetneq \cone(D')$, cf.~\textcites{Gal_Leberling_1977}{Gal_1980}.
We now reprove the following statement (cf.~ibid. and~\cite{Thoai_2012}) using our new formulation in \cref{thm:nonessential_objective_conic_combination}, \cref{enm:ne_system_conic_combination}.

\begin{corollary}
    \label{cor:nonessential_objectives_minimal_spanning_system}
    Consider the problem \molpref, and let $\Pcal$ be the set of its objectives.
    Moreover, let $\Pcal' \subseteq \Pcal$ be a minimal spanning system of $\cone(\Pcal)$.
    Then, $\Pcal \setminus \Pcal'$ is a nonessential system of objectives.
\end{corollary}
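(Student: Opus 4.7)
The plan is to obtain the corollary as an immediate consequence of \cref{thm:nonessential_objective_conic_combination}\,\cref{enm:ne_system_conic_combination} by choosing the candidate nonessential system $\Pcal^* \coloneq \Pcal \setminus \Pcal'$. To apply that part of the theorem, I need to verify the hypothesis
\begin{equation*}
    \forall\, p \in \Pcal^* \colon p \in \cone(\Pcal \setminus \Pcal^*) = \cone(\Pcal')\period
\end{equation*}

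First I would unwind the definition of a minimal spanning system: $\Pcal' \subseteq \Pcal$ being a minimal spanning system of $\cone(\Pcal)$ means, in particular, $\cone(\Pcal') = \cone(\Pcal)$ (the minimality condition on proper subsets will not be needed here). Then for any $p \in \Pcal \setminus \Pcal' \subseteq \Pcal$, the trivial inclusion $\Pcal \subseteq \cone(\Pcal)$ together with the spanning property gives $p \in \cone(\Pcal) = \cone(\Pcal')$, which is exactly the condition required by \cref{thm:nonessential_objective_conic_combination}\,\cref{enm:ne_system_conic_combination}.

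Applying that theorem yields that $\Pcal^* = \Pcal \setminus \Pcal'$ is a nonessential system of objectives, completing the proof. There is no real obstacle here, since the work of relating the conic-combination condition to the equality of efficient sets has already been carried out in \cref{thm:nonessential_objective_conic_combination}; the only content of the corollary is the observation that the defining property of a minimal spanning system delivers precisely this condition for every element outside $\Pcal'$.
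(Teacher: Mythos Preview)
Your proof is correct and follows essentially the same route as the paper: set $\Pcal^* = \Pcal \setminus \Pcal'$, observe $\Pcal \setminus \Pcal^* = \Pcal'$, and use the spanning property $\cone(\Pcal') = \cone(\Pcal)$ to verify the hypothesis of \cref{thm:nonessential_objective_conic_combination}\,\cref{enm:ne_system_conic_combination}. Your remark that the minimality part of the definition is not used is also accurate.
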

\begin{proof}
    Regarding \cref{thm:nonessential_objective_conic_combination}, it suffices to show $\Pcal \setminus \Pcal' \subseteq \cone(\Pcal \setminus (\Pcal \setminus \Pcal'))$.
    This follows immediately from $\Pcal \setminus \Pcal' \subseteq \cone(\Pcal)$, the assumption $\cone(\Pcal) = \cone(\Pcal')$, and $\Pcal' = \Pcal \setminus (\Pcal \setminus \Pcal')$.
    \qedhere
\end{proof}

\begin{remark}
    \label{rem:extreme_directions_minimal_spanning_system}
    If $\cone(\Pcal)$ is pointed (i.e., it has trivial lineality space~$\{0\}$), the extreme directions $\extdir(\cone(\Pcal))$ of $\cone(\Pcal)$ form a minimal spanning system.
    From the corollary then follows that it suffices to determine those extreme directions and reduce the objective matrix to the rows corresponding to them.
    Note that if $\cone(\Pcal)$ has nontrivial lineality space~$T \neq \{0\}$, one can decompose $\cone(\Pcal)$ via $\cone(\Pcal) = T + (\cone(\Pcal) \cap T^\perp)$~\parencite{Rockafellar_1970}, determine the extreme directions of $\cone(\Pcal) \cap T^\perp$, and then add the generating directions of~$T$ in order to obtain a minimal spanning system.
\end{remark}

In the transformation of the MOLP into a VLP according to \cref{thm:equivalence_molp_vlp}, the ordering cone~$C$ is given in an H-representation~$\inlineset{z \in \R^k}{L z \geq 0}$, see for example~\textcite{Ciripoi_etal_2019} for different representations of polyhedra.
In practice, if the ordering cone~$C$ has nonempty interior, one can calculate a minimal H-representation of the ordering cone, that is, an H-representation~$C' = \inlineset{z \in \R^k}{L' z \geq 0}$ with a minimal number of inequalities, where $L'$ is a submatrix of~$L$ up to scaling of rows.
Similar to the proof of \cref{thm:nonessential_objective_conic_combination}, a re-transformation of the VLP yields an equivalent MOLP with objective matrix~$P' = L' R$.
In fact, if the ordering cone has nonempty interior, the determination of a minimal H-representation is the same as the removal of nonessential objectives due to \cref{cor:nonessential_objectives_minimal_spanning_system}.

To show this, we denote the transposed rows of $L$ by $\Lcal = \{\ell^1, \ldots, \ell^q\}$.
Without loss of generality, we assume that the rows of $L$ and $L'$ are both ordered and scaled such that the transposed rows of $L'$ form the set $\Lcal' = \{\ell^1, \ldots, \ell^{\cramped{q'}}\}$.
Furthermore, we set $\Lcal^* = \Lcal \setminus \Lcal'$.
Due to the injectivity of $R^\transposed$, the elements of $\Lcal$ and $\Lcal^*$ uniquely correspond to the elements of $\Pcal = R^\transposed[\Lcal]$ and $\Pcal^* = R^\transposed[\Lcal^*]$, respectively.
By construction, the polar cones of $C$ and $C'$ are equal, i.e., $\cone(\Lcal) = \cone(\Lcal \setminus \Lcal^*)$.
Thus, $\Lcal^*$ is a subset of $\cone(\Lcal \setminus \Lcal^*)$.
Applying $R^\transposed$ to this inclusion, one obtains $\Pcal^* \subseteq \cone(\Pcal \setminus \Pcal^*)$, which means that $\Pcal^*$ is nonessential.
Conversely, if $\Pcal^* = \Pcal \setminus \extdir(\cone(\Pcal))$, the injectivity of $R^\transposed$ yields $\Lcal^* = \Lcal \setminus \extdir(\cone(\Lcal))$.
As $\extdir(\cone(\Lcal))$ is a minimal spanning system of~$\cone(\Lcal)$ if $\cone(\Pcal)$ is pointed, the polar cone of~$\cone(\extdir(\cone(\Lcal)))$ must be given by a minimal H-representation of $C$.
If $\cone(\Pcal)$ is not pointed, a similar statement follows using the decomposition of~$\cone(\Pcal)$ mentioned above.

Therefore, if the used VLP solver inherently calculates a minimal H-representation (or similarly, a V-representation, cf.~\cite{Ciripoi_etal_2019}), the determination of nonessential objectives is obsolete.
Otherwise, it is worth a thought executing such a facet or vertex enumeration manually.
This is because one then searches for nonessential objectives in $\R^k$ instead of $\R^n$, and the ordering cone becomes simpler.
Alternatively, one can employ the FEC algorithm from~\textcite[Section~3]{Thoai_2012}.

    \section{Examples}
\label{sec:examples}

In order to verify that the above-described approach indeed provides the desired advantages, we now present two classes of examples.
We analyze how the computing time depends on either the number~$q$ of objectives or the rank~$k$ of the objective matrix, while holding all the other parameters constant.

For the construction of the examples, we use the \emph{bensolve tools} package~\parencite{Loehne_Weissing_2016,Ciripoi_etal_2019,Ciripoi_etal_bensolvetools} in version~1.3 for GNU~Octave.
For solving, we apply the \emph{Bensolve} VLP solver~\parencite{Loehne_Weissing_Bensolve,Loehne_Weissing_2017} in version~2.1.0 which calculates a solution of the MOLP or VLP, respectively, according to \cref{sec:problem_setting_preliminaries} as well as corresponding feasible points and directions.
The computer used is an Apple MacBook Air from \num{2023} (\SI{15}{inches} model, Apple M2 chip, \SI{16}{\giga\byte} memory).

\subsection{A First Series of Examples}
\label{subsec:example_1}

The first problem set is constructed from Example~13 of~\textcite{Loehne_2023} in~$\R^{12}$.
Here, we use Octave in version~9.4.0.
Let $P_0$ be a regular simplex in~$\R^{12}$ with edge length~$1$, symmetrically placed around the origin (generated via the \texttt{simplex} function from \emph{bensolve tools}).
Recursively, define $P_\ell$ as the Minkowski sum of~$P_{\ell-1}$ and its polar $\inlineset{y \in \R^{12}}{\forall\, x \in P_{\ell-1} : y^\transposed x \leq 1}$.
From this sequence, we choose~$P_5$.
We calculate a P-representation~\parencite{Ciripoi_etal_2019} of~$P_5$ with \emph{bensolve tools}, i.e., a matrix~$M \in \R^{12 \times n}$ and a polyhedron~$S = \inlineset{x \in \R^n}{a \leq A x \leq b, l \leq x \leq u}$ with~$A \in \R^{m \times n}$ such that $P_5 = \inlineset{M x}{x \in S}$.
The coefficient matrix~$A$ is of size $m = 1421$ times $n = 2800$.

Since $P_5$ is full-dimensional, we have $\rk(M) = 12$.
We now construct a matrix~$T$ supposed to reduce the rank of~$M$ to~$2$ as follows.
In the first two columns of~$T$, all entries are set to $-\sqrt{3} / 6$, except the entries in rows \numlist{3;6;9;12} of the first column and in rows \numlist{1;4;7;10} of the second column which are set to zero.
Column $j = 3, \ldots, 12$ is equal to the normalized vector of $(j - 8)$ times the first plus $(j - 7)$ times the second column.
Finally, we choose the first $q$~rows of~$T M$ as the objective matrix~$P$, where $3 \leq q \leq 12$.
By construction of~$T$, the rank of~$P$ is~$2$ for each such~$q$.

We then apply the \emph{Bensolve} VLP solver to solve the resulting MOLPs for every~$q$ via the \texttt{molpsolve} function from \emph{bensolve tools}.
Moreover, for every test problem, we calculate an LU~decomposition of~$P$ by the \texttt{lu} function from Octave, form the cone~$C$ as a \texttt{polyh} object and apply the VLP solver function \texttt{vlpsolve} from \emph{bensolve tools} to the resulting VLP.
The computing times of solving each MOLP and VLP (including the decomposition step) are measured.
Each test problem is run seven times in succession.
However, only the last five runs are used to calculate the average computing time.

\Cref{fig:example_1_mean_molp} plots the average computing times against the number of objectives.
The computing time for the MOLP increases exponentially with~$q$.
In contrast, the time for solving the VLP, including decomposition, remains nearly constant.

\begin{figure}[hbt]
    \centering
    \begin{tikzpicture}
    \begin{axis}[ymode = log, basic q, x-q, y-time, legend top left]
        \addplot
        table[
        col sep = comma,
        x index = 0,
        y index = 1]
            {./example_1q-5-2.csv};
        \addlegendentry{MOLP}

        \addplot
        table[
        col sep = comma,
        x index = 0,
        y index = 3]
            {./example_1q-5-2.csv};
        \addlegendentry{VLP}
    \end{axis}
\end{tikzpicture}
    \caption{
        Computing times for the MOLP and the VLP (including decomposition) for a number of $q$~objectives for the problem stated in the text.
        Logarithmic plot.
    }
    \label{fig:example_1_mean_molp}
\end{figure}

\subsection{Randomly Generated Problem Instances}
\label{subsec:example_2}

For the second class of examples, we use Octave in version~11.1.0.
\textcite{Rudloff_etal_2017} randomly generated instances of MOLPs in order to compare algorithms.
The programs are of the form
\begin{equation}
    \label{eq:numerical_example_molp}
    \min \quad P' x \qq{s.t.} A x \leq b,\ x \geq 0
\end{equation}
with objective matrix~$P' \in \R^{k \times n}$, constraint matrix~$A \in \R^{m \times n}$, and right-hand side vector~$b \in \R_+^{m}$.

In their first set of examples, they independently chose the entries of~$P'$ and~$A$ from a normal distribution with a mean of~\num{0} and a variance of~\num{100}, and the entries of~$b$ from a uniform distribution over~$[0, 10]$.
We fix parameter values $q$, $k$, $n$, and $m$, and generate \SI{100}{problems} analogously.
We then form $q - k$ linear combinations of the transposed rows~$p^1, \ldots, p^k$ of~$P'$ via
\begin{equation}
    \label{eq:example_2_linear_combinations}
    p^i = \sum_{j = 1}^{k} \dfrac{\lambda_{i j}}{\sqrt{k}} p^{\,j} \comma\quad i = k + 1, \ldots, q \comma
\end{equation}
and add them together to a new objective matrix~$P = (p^1 \ \cdots \ p^q)^\transposed$.
The coefficients~$\lambda_{i j}$ are chosen from a standard normal distribution.

Since it is possible that the first $k$~rows of~$P$ are generated linearly dependently, we calculate $\rk(P)$ before applying the solver.
If $\rk(P) \neq k$, we skip the problem instance and generate a new one.
As the employed \emph{Bensolve} version cannot handle non-solid ordering cones, we also calculate the dimension of~$C$ according to \cref{rem:cone_dimension}.
If it is less than~$k$, we re-generate the problem instance.

As in \cref{subsec:example_1}, we calculate the computing times for the MOLP and the corresponding VLP, including the LU decomposition.
For $n = 20$ and $m = 40$, we vary the number~$q$ from \numrange{3}{12} for $k = 2$ and from \numrange{4}{7} for $k = 3$.
\Cref{fig:example_2q_2_ratio} shows the ratio of the computing times for the VLP and the MOLP for $k = 2$.
For $q \geq 6$, all ratios are below~\num{1}, meaning the VLP approach yields a lower computing time than the MOLP.
As the number of objectives increases, the speedup becomes more significant.
In particular, for $q = 12$, solving the VLP requires less than \SI{1}{\percent} of the solution time for the MOLP in \SI{98}{\percent} of the cases.
The minimum ratio is below \SI{0.01}{\percent}, and the median is approximately \SI{0.053}{\percent}.
For $q = 5$ and $q = 4$, \SI{98}{\percent} and \SI{67}{\percent} of cases, respectively, lead to a speedup.
The VLP approach could solve only \SI{2}{\percent} of the problem instances with $q = 3$ more efficiently than the MOLP approach, with a maximum ratio of about \SI{3.0}{\percent}.
One reason the VLP approach may not perform better than the MOLP for a small number of objectives is the computation of the decomposition.

\begin{figure}[hbt]
    \centering
    \begin{tikzpicture}
    \begin{groupplot}
        [boxplots, x-ratio, xmode = log, group style = {rows = 2, xlabels at = edge bottom, vertical sep = 1\baselineskip+1.5ex}, groupplot ylabel = {number $q$ of objectives}]
        \nextgroupplot

        \addplot+[boxplot prepared = {
            draw position = 8,
            lower whisker = 0.006005,
            lower quartile = 0.022450,
            median = 0.033231,
            upper quartile = 0.056335,
            upper whisker = 0.099345
        }] coordinates {
            (0, 0.107873)
            (0, 0.135058)
            (0, 0.124211)
            (0, 0.151390)
            (0, 0.119536)
            (0, 0.218742)
            (0, 0.187285)
        };

        \addplot+[boxplot prepared = {
            draw position = 9,
            lower whisker = 0.002630,
            lower quartile = 0.008745,
            median = 0.013807,
            upper quartile = 0.023186,
            upper whisker = 0.044827
        }] coordinates {
            (0, 0.069263)
            (0, 0.046603)
            (0, 0.086828)
            (0, 0.048839)
        };

        \addplot+[boxplot prepared = {
            draw position = 10,
            lower whisker = 0.000981,
            lower quartile = 0.002896,
            median = 0.004064,
            upper quartile = 0.006511,
            upper whisker = 0.010989
        }] coordinates {
            (0, 0.012773)
            (0, 0.118047)
            (0, 0.027571)
            (0, 0.020445)
            (0, 0.022466)
            (0, 0.013477)
            (0, 0.015638)
            (0, 0.013987)
        };

        \addplot+[boxplot prepared = {
            draw position = 11,
            lower whisker = 0.000322,
            lower quartile = 0.001016,
            median = 0.001735,
            upper quartile = 0.003647,
            upper whisker = 0.006904
        }] coordinates {
            (0, 0.029736)
            (0, 0.018082)
            (0, 0.009689)
            (0, 0.009037)
            (0, 0.008347)
            (0, 0.010901)
            (0, 0.100817)
            (0, 0.023244)
            (0, 0.012061)
            (0, 0.121433)
            (0, 0.011479)
        };

        \addplot+[boxplot prepared = {
            draw position = 12,
            lower whisker = 0.000074,
            lower quartile = 0.000268,
            median = 0.000529,
            upper quartile = 0.001189,
            upper whisker = 0.002421
        }] coordinates {
            (0, 0.118639)
            (0, 0.006807)
            (0, 0.003453)
            (0, 0.010492)
            (0, 0.002855)
            (0, 0.004726)
        };

        \nextgroupplot

        \addplot+[boxplot prepared = {
            draw position = 3,
            lower whisker = 0.972966,
            lower quartile = 1.441441,
            median = 1.601916,
            upper quartile = 1.817467,
            upper whisker = 2.336582
        }] coordinates {
            (0, 2.561265)
            (0, 2.809256)
            (0, 2.556427)
            (0, 3.039269)
            (0, 2.749118)
            (0, 3.526981)
        };

        \addplot+[boxplot prepared = {
            draw position = 4,
            lower whisker = 0.544211,
            lower quartile = 0.752096,
            median = 0.851614,
            upper quartile = 1.050220,
            upper whisker = 1.480328
        }] coordinates {
            (0, 1.730326)
            (0, 1.663142)
            (0, 1.573412)
            (0, 1.693353)
            (0, 1.517181)
            (0, 1.510188)
        };

        \addplot+[boxplot prepared = {
            draw position = 5,
            lower whisker = 0.201774,
            lower quartile = 0.410810,
            median = 0.513114,
            upper quartile = 0.635047,
            upper whisker = 0.953471
        }] coordinates {
            (0, 1.078103)
            (0, 1.203453)
        };

        \addplot+[boxplot prepared = {
            draw position = 6,
            lower whisker = 0.090270,
            lower quartile = 0.153838,
            median = 0.224958,
            upper quartile = 0.292645,
            upper whisker = 0.469306
        }] coordinates {
            (0, 0.538050)
            (0, 0.611514)
            (0, 0.764871)
            (0, 0.913030)
            (0, 0.685241)
            (0, 0.535337)
        };

        \addplot+[boxplot prepared = {
            draw position = 7,
            lower whisker = 0.024089,
            lower quartile = 0.060195,
            median = 0.086386,
            upper quartile = 0.115780,
            upper whisker = 0.189673
        }] coordinates {
            (0, 0.291768)
            (0, 0.299749)
            (0, 0.220109)
            (0, 0.400384)
            (0, 0.396373)
            (0, 0.199967)
            (0, 0.247520)
            (0, 0.244311)
            (0, 0.611151)
        };
    \end{groupplot}
\end{tikzpicture}
    \caption{
        Boxplot showing the ratios of the computing times (logarithmic scaling) for the VLP (including decomposition) and the MOLP, as stated in the text, with an objective matrix of rank~\num{2} containing $q$~rows.
        For each value of~$q$, the sample size is~\num{100}.
        The lines inside the boxes represent the medians, and the boxes indicate the first and third quartiles of the datasets.
        The whiskers show the ratios within the \num{1.5}-interquartile range (i.e., \num{1.5} times the box width) outside the box, and the small vertical lines represent outliers.
    }
    \label{fig:example_2q_2_ratio}
\end{figure}

In \cref{fig:example_2q_3_ratio}, the results for $k = 3$ are depicted.
Except one problem instance in the cases $q = 4$ and $q = 5$, respectively, all problem instances yield a ratio below~\num{1}.
For $q = 7$, the VLP approach needs between \SIlist{0.014;7.4}{\percent} of the computing time for the MOLP, with a median of \SI{0.14}{\percent}.

\begin{figure}[hbt]
    \centering
    \begin{tikzpicture}
    \begin{axis}[boxplots, x-ratio, y-q, xmode = log]
        \addplot+[boxplot prepared = {
            draw position = 4,
            lower whisker = 0.161781,
            lower quartile = 0.324338,
            median = 0.398101,
            upper quartile = 0.533611,
            upper whisker = 0.836671
        }] coordinates {
            (0, 1.111767)
        };

        \addplot+[boxplot prepared = {
            draw position = 5,
            lower whisker = 0.012189,
            lower quartile = 0.034138,
            median = 0.053545,
            upper quartile = 0.107803,
            upper whisker = 0.208021
        }] coordinates {
            (0, 0.317281)
            (0, 0.241306)
            (0, 1.701612)
            (0, 0.396610)
            (0, 0.264007)
            (0, 0.323334)
            (0, 0.229957)
        };

        \addplot+[boxplot prepared = {
            draw position = 6,
            lower whisker = 0.000926,
            lower quartile = 0.003577,
            median = 0.007815,
            upper quartile = 0.019055,
            upper whisker = 0.041365
        }] coordinates {
            (0, 0.053478)
            (0, 0.051640)
            (0, 0.053515)
            (0, 0.044324)
        };

        \addplot+[boxplot prepared = {
            draw position = 7,
            lower whisker = 0.000140,
            lower quartile = 0.000663,
            median = 0.001372,
            upper quartile = 0.003230,
            upper whisker = 0.006270
        }] coordinates {
            (0, 0.008160)
            (0, 0.073616)
            (0, 0.014619)
            (0, 0.007114)
            (0, 0.008517)
            (0, 0.008599)
            (0, 0.014280)
            (0, 0.009136)
        };
    \end{axis}
\end{tikzpicture}
    \caption{
        Boxplot showing the ratios of the computing times (logarithmic scaling) for the VLP (including decomposition) and the MOLP, as stated in the text, with an objective matrix of rank~\num{3} containing $q$~rows.
        For each value of~$q$, the sample size is~\num{100}.
        See \cref{fig:example_2q_2_ratio} for notes on how to interpret the chart.
    }
    \label{fig:example_2q_3_ratio}
\end{figure}

We also vary the rank~$k$ of the objective matrix from \numrange{2}{4}, holding the number of objectives constant as~$q = 5$.
Again, $n = 20$ and $m = 40$.
\Cref{fig:example_2k_ratio} indicates that the closer $q$ and $k$ are, the higher is the speedup.

\begin{figure}[hbt]
    \centering
    \begin{tikzpicture}
    \begin{axis}[boxplots, x-ratio, y-k, xmode = log]
        \addplot+[boxplot prepared = {
            draw position = 2,
            lower whisker = 0.205331,
            lower quartile = 0.388081,
            median = 0.473466,
            upper quartile = 0.610560,
            upper whisker = 0.942120
        }] coordinates {
            (0, 1.118814)
            (0, 1.016097)
            (0, 1.291991)
            (0, 0.950915)
            (0, 1.055263)
        };

        \addplot+[boxplot prepared = {
            draw position = 3,
            lower whisker = 0.012112,
            lower quartile = 0.031929,
            median = 0.063173,
            upper quartile = 0.121691,
            upper whisker = 0.251273
        }] coordinates {
            (0, 0.416829)
            (0, 0.392586)
            (0, 0.433557)
        };

        \addplot+[boxplot prepared = {
            draw position = 4,
            lower whisker = 0.006983,
            lower quartile = 0.029969,
            median = 0.044580,
            upper quartile = 0.082770,
            upper whisker = 0.156854
        }] coordinates {
            (0, 0.391495)
            (0, 0.213069)
            (0, 0.197265)
            (0, 0.270057)
            (0, 0.921169)
            (0, 0.165757)
            (0, 0.306715)
            (0, 0.175326)
            (0, 0.303949)
            (0, 0.234251)
        };
    \end{axis}
\end{tikzpicture}
    \caption{
        Boxplot showing the ratios of the computing times (logarithmic scaling) for the VLP (including decomposition) and the MOLP, as stated in the text, with an objective matrix of rank~$k$ containing \num{5}~rows.
        For each value of~$k$, the sample size is~\num{100}.
        See \cref{fig:example_2q_2_ratio} for notes on how to interpret the chart.
    }
    \label{fig:example_2k_ratio}
\end{figure}

    \section{Conclusion}
\label{sec:conclusion}

In this article, we examined an approach that reduces the number of objectives in a multi-objective linear program (MOLP) to the rank of the objective matrix.
A review of the literature reveals approaches for transforming vector linear programs (VLPs) into MOLPs, e.g., \textcite{Sawaragi_etal_1985}, and generalizations, e.g., \textcite{Dempe_etal_2015}.
Our proposed method applies the transformation in reverse.

For the transformation, the objective matrix is decomposed into two factors: one forms a new ordering cone, and the other factor remains as the objective matrix.
Examples demonstrate the value of this approach, as it can significantly reduce computing time.
Moreover, it becomes much easier for decision makers to understand and interpret the problem and its solutions.
This method is not limited to MOLP but can also be applied to VLP.
More generally, it can be used for arbitrary vector optimization problems with linear objectives that are linearly dependent, independent of the structure of the feasible set.

This approach can only handle decompositions into matrices whose number of columns or rows, respectively, is minimal, that is, equal to the rank of the objective matrix.
Using the concept of generalized minimality, we extended the approach to non-minimal decompositions, and stated a more advanced theorem on the equivalence of MOLP and VLP.
As a corollary, we obtained Theorem~2.11 in~\textcite{Gal_Leberling_1977} on nonessential objectives, similar to Corollary~1 in~\textcite{Thoai_2012}.
We illustrated that, after eliminating nonessential objectives, some redundant information (i.e., linear dependence) may remain in the problem.
This occurs because linear combinations might not be conic combinations.
With our approach, linear dependence is eliminated.


    \section*{Declaration of Competing Interests}

    The authors report that they have no competing interests to declare.


    \section*{Funding}

    The work was supported by the \enquote{Honours Programme for Future Researchers} from Friedrich Schiller University of Jena, Germany.


    \section*{Declaration of Generative AI and AI-assisted Technologies in the Writing Process}

    During the preparation of this work, the authors used DeepL Write in order to check grammar and spelling, and improving style only.
    After using this tool, the authors reviewed and edited the content as needed and take full responsibility for the content of the publication.


    \section*{Acknowledgments}

    We thank the reviewers for their valuable comments and suggestions, which greatly improved the article.
    Special thanks go to Ina Lammel from Fraunhofer IWTM for the suggestions regarding the literature.

    \printbibliography

\end{document}